\newtheorem{theorem}{\bf Theorem}[section]
\newtheorem{corollary}[theorem]{\bf Corollary}
\newtheorem*{theorem*}{Theorem}
\def\({\left(}
\def\){\right)}
\begin{document}

\title{Gradient estimates of Hamilton-Souplet-Zhang type for a general heat equation on Riemannian manifolds}

\author{Nguyen Thac Dung and Nguyen Ngoc Khanh}
\date{\today}
\maketitle

%----------additions
%\dedicatory{To my boss}
%%% ----------------------------------------------------------------------

\begin{abstract}
The purpose of this paper is to study gradient estimates of Hamilton - Souplet - Zhang type for the following general heat equation
$$ u_t=\Delta_V u + au\log u+bu $$
on noncompact Riemannian manifolds. As its application, we show a Harnack inequality for the positive solution and a Liouville type theorem for a nonlinear elliptic equation. Our results are an extension and improvement of the work of Souplet - Zhang (\cite{SZ}), Ruan (\cite{Ruan}), Yi Li (\cite{Yili}), Huang-Ma (\cite{HM}), and Wu (\cite{Wu1}). 

\noindent
\vskip0.2cm
\noindent {\it 2000 Mathematics Subject Classification}: Primary 58J35, Secondary 35B53 35K0.

\noindent {\it Key words and phrases}: Gradient estimates, General heat equation, Laplacian comparison theorem, $V$-Bochner-Weitzenb\"{o}ck, Bakry-Emery Ricci curvature. 
\end{abstract}

%%%%%%%%%%%%%%%%%%%%%%%%%%%%%%%%%%%%%%%%%%%%%%%%%%%%%%%%%%%%%%%%%%%%%%%%%%%%%%%%%%%%%%%%%%%%%%%%%%%%%%%%%%%%%%%%%%%%%%%%%%%%%%%%%%%%%%%
\section{Introduction}
In the seminal paper \cite{liyau}, Li and Yau studied gradient estimates and Harnack inequalities for a positive solution of heat equations on a complete Riemannian manifold. Later, Li-Yau's gradient estimate was investigated by many mathematicians. Many works have been done to show generalizations and improvements of Li-Yau's results (\cite{Ruan, Wu1} and the references there in). In 1993, Hamilton introduced a different gradient estimate for a heat equation on compact Riemannian manifolds. Then, Hamilton's gradient estimate was generalized to the case of complete noncompact Riemannian manifolds. For example, see \cite{SZ} and the references there in. 

On the other hand, the weighted Laplacian on smooth metric measure spaces are of interest, recently. Recall that a smooth metric measure space is a triple $(M, g, e^{-f}dv)$, where $M$ is a Riemannian manifold with metric tensor $g, f$ is a smooth function on $M$ and $dv$ is the volume form with respect to $g$. The weighted Laplacian is defined on $M$ by 
$$ \Delta_f\cdot=\Delta\cdot - \left\langle \nabla f, \nabla \cdot\right\rangle.  $$
Here $\Delta$ stands for the Laplacian on $M$. On $(M, g, e^{-f}dv)$, the Bakry-\'{E}mery curvature $Ric_f$, and the $N$-dimensional Bakry-\'{E}mery curvature $Ric_f^{N}$ respectively are defined by  
$$ Ric_f=Ric+Hess f, \quad Ric_f^{N}=Ric_f-\frac{1}{N}\nabla f\otimes\nabla f ,$$
where  $Ric, Hess f$ are the Ricci curvature and the Hessian of $f$ on $M$, respectively. In particular, gradient Ricci solitons can be considered as smooth
metric measure spaces. Hence, the information on smooth metric measure spaces may help us to understand geometric structures of gradient Ricci solitons. Recently, X. D. Li, Huang-Ma, and Ruan investigated heat equations on smooth metric measure spaces. They have shown several results, for example gradient estimates, estimates of the heat kernel,
Harnack type inequalities, Liouville type theorems... see \cite{XDL, HM, Ruan} and the references there in.  An important generalization of the weighted Laplacian is the following operator
$$ \Delta_V\cdot=\Delta\cdot +\left\langle V, \nabla\cdot\right\rangle  $$ 
defined on Riemannian manifolds $(M, g)$. Here $\nabla$ and $\Delta$ are the Levi-Civita connenction and Laplacian with respect to metric $g$, respectively. $V$ is a smooth vector field on $M$. A natural generalization of Bakry-\'{E}mery curvature and $N$-Bakry-\'{E}mery curvature are  the following two tensors (\cite{Jost, Yili})
$$ Ric_V=Ric - \frac{1}{2}{\mathcal L}_Vg, Ric_V^{N}=Ric_V-\frac{1}{N}V\otimes V, $$
where $N>0$ is a natural number and ${\mathcal L}_V$ is the Lie derivative along the direction $V$. When $V=\nabla f$ and $f$ is a smooth function on $M$ then $Ric_V, Ric_V^{N}$ become Bakry-\'{E}mery curvature and $N$-Bakry-\'{E}mery curvature. In \cite{Yili}, Li studied gradient estimates of Li-Yau and Hamilton type for the following general heat equation
$$ u_t=\Delta_V u+au\log u $$
on compact Riemannian manifolds $(M, g)$.

In this paper, let $(M, g)$ be a Riemannian manifold and $V$ be a smooth vector field on $M$. We consider the following general heat equation
\begin{equation}\label{heat} 
u_t=\Delta_V u+au\log u+bu 
\end{equation}
where $a, b$ are functions defined on $M\times[0, +\infty)$ which are differentiable with respect to the first variable $x\in M$. Suppose $u$ is a positive solution to \eqref{heat} and $u\leq C$ for some positive constant $C$. Let $\widetilde{u}:=u/C$, then $0<\widetilde{u}\leq 1$ and $\widetilde{u}$ is a solution to 
$$ \widetilde{u}_t=\Delta\widetilde{u}+ \left\langle V,\nabla \widetilde{u}\right\rangle+a\widetilde{u}\log\widetilde{u}+\widetilde{b}\widetilde{u}, $$
where $\widetilde{b}:=\left(b+a\log C\right)$. Due to this reason, without loss of generality, we may assume $0< u\leq 1$. Our first main theorem is as follows.
 \begin{theorem}\label{main11}
 Let $M$ be a complete noncompact Riemannian manifold of dimension $n$. Let $V$ be a smooth vector field on $M$ such that $Ric_{V}\geq -K$ for some $K\geq 0$ and $|V|\leq L$ for some positive number $L$. Suppose that $a,b$ are functions of constant sign on $M\times[0, +\infty)$, moreover, $a, b$ are differentiable with respect to $x\in M$. Assume that $u$ is a  positive solution to the following general heat equation 
\begin{equation}\label{eq:2}
u_t=\Delta u+\left\langle V,\nabla u\right\rangle+au\log u+bu
\end{equation} 
on $M\times [0,+\infty)$. If $u\leq 1$, then
$$ \dfrac{|\nabla u|}{{u}}\leq \Bigg(\dfrac{1}{t^{\frac{1}{2}}}+\sup\limits_{M\times[0, +\infty)}\left\{\sqrt{2(\max\left\{0, K+a\right\}+b+|b|)}+\sqrt[4]{\dfrac{|\nabla a|^2}{2|a|}+\dfrac{|\nabla b|^2}{2|b|}}\right\}\Bigg)(1-\log u).$$
\end{theorem}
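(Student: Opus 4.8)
The plan is to pass to $f=\log u$ and run a Souplet--Zhang type localization argument adapted to $\Delta_V$. Since $0<u\le 1$ we have $f\le 0$, and a direct substitution turns \eqref{eq:2} into
\[
f_t=\Delta_V f+|\nabla f|^2+af+b,\qquad \Delta_V f=\Delta f+\langle V,\nabla f\rangle .
\]
Setting $\phi:=1-f=1-\log u\ge 1$, the quantity to be estimated is exactly $\sqrt{w}$ for
\[
w:=\frac{|\nabla f|^2}{\phi^2}=\frac{|\nabla u|^2}{u^2(1-\log u)^2},
\]
so it suffices to bound $w$ by $\bigl(t^{-1/2}+C\bigr)^2$, where $C$ is the supremum constant in the statement.

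First I would derive a differential inequality for $w$ under $\Delta_V-\partial_t$. Applying the $V$-Bochner--Weitzenb\"ock formula $\tfrac12\Delta_V|\nabla f|^2=|\mathrm{Hess}\,f|^2+\langle\nabla f,\nabla\Delta_V f\rangle+Ric_V(\nabla f,\nabla f)$ to $h:=|\nabla f|^2$, substituting $\Delta_V f-f_t=-h-af-b$, and then using the quotient rule for $w=h/\phi^2$ to re-express everything in terms of $\nabla w$, one collects terms into a lower bound of the schematic form
\[
(\Delta_V-\partial_t)w\ \ge\ \frac{2|\mathrm{Hess}\,f|^2}{\phi^2}-2f\,w^2+2\Big(\frac{2}{\phi}-1\Big)\langle\nabla f,\nabla w\rangle+\mathcal R,
\]
where $\mathcal R$ collects the curvature term (bounded below by $-2Kw$ via $Ric_V\ge -K$), the zeroth-order terms in $a,b$, and the first-order sources $-2f\phi^{-2}\langle\nabla f,\nabla a\rangle$ and $-2\phi^{-2}\langle\nabla f,\nabla b\rangle$. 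The term $-2f\,w^2\ge 0$ comes for free from $f\le 0$, and the refined Cauchy--Schwarz bound $|\mathrm{Hess}\,f|^2\ge \langle\nabla f,\nabla h\rangle^2/(4h^2)$ converts the Hessian into a further positive multiple of $w^2$; together these supply the coercive $w^2$ term that drives the estimate, which is why no dimensional constant appears.

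Next I would localize. Fix $x_0\in M$ and $R,T>0$, take a smooth cutoff $\psi=\psi(\dist(x,x_0))$ with $\psi\equiv 1$ on $B(x_0,R)$, $\mathrm{supp}\,\psi\subset B(x_0,2R)$, $|\nabla\psi|^2/\psi\le C/R^2$ and $\psi''\ge -C/R^2$. To bound $\Delta_V\psi=\Delta\psi+\langle V,\nabla\psi\rangle$ from below I would invoke the $V$-Laplacian comparison theorem, which under $Ric_V\ge -K$ \emph{together with} $|V|\le L$ yields $\Delta_V r\le C(n,K,L)\bigl(1+r^{-1}\bigr)$; this is the one place the bound $|V|\le L$ is used. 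Applying the maximum principle to $t\psi w$ on $B(x_0,2R)\times[0,T]$ at an interior maximum, where $\nabla(\psi w)=0$, $\Delta_V(\psi w)\le 0$ and $\partial_t(\psi w)\ge 0$, turns the differential inequality into an algebraic one. Young's inequality then absorbs the cutoff gradient terms and the $\nabla a,\nabla b$ sources into the leading $w^2$ term, giving a quadratic inequality for $t\psi w$ whose solution, after letting $R\to\infty$ and optimizing the weight, produces $w\le t^{-1}+2C^2$ and hence the claimed bound on $\sqrt{w}$.

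The main obstacle will be the bookkeeping in the evolution inequality for $w$: the extra nonlinearity $af+b$, absent in the pure heat case of Souplet--Zhang, generates both the zeroth-order terms in $a,b$ and the first-order sources in $\nabla a,\nabla b$, and it is the precise Young-type balancing of $\sqrt{w}\,|\nabla a|$ and $\sqrt{w}\,|\nabla b|$ against the coercive $w^2$ term that produces the fourth-root constant $\sqrt[4]{|\nabla a|^2/(2|a|)+|\nabla b|^2/(2|b|)}$. Obtaining the sharp coefficient $\max\{0,K+a\}$ rather than a cruder $K+\max\{0,a\}$ requires combining the curvature lower bound with the $a$-term (after the simplification $(1-f)+f=1$ that removes the spurious $1/\phi$ weight) \emph{before} estimating; the constant-sign hypotheses on $a$ and $b$ are exactly what make this combination legitimate and what give meaning to the quotients $|\nabla a|^2/|a|$ and $|\nabla b|^2/|b|$.
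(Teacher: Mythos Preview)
Your proposal is correct and follows essentially the same route as the paper: the same auxiliary quantity $w=|\nabla\log(1-f)|^2=|\nabla f|^2/(1-f)^2$, the $V$-Bochner--Weitzenb\"ock formula under $Ric_V\ge -K$, localization with a radial cutoff $\psi$ and the time weight $t$, the $V$-Laplacian comparison (this is indeed where $|V|\le L$ enters), and a maximum-principle argument on $t\psi w$ yielding a quadratic inequality. One small difference in bookkeeping: the paper does \emph{not} use the Hessian term at all---it simply drops $2|\nabla^2\log(1-f)|^2\ge 0$ and obtains the full coercive term $2(1-f)w^2$ directly from expanding $2\langle\nabla(-fw),\nabla\log(1-f)\rangle$ in the Bochner identity, so your Cauchy--Schwarz step on $|\mathrm{Hess}\,f|^2$ is unnecessary (and your stated coercive term $-2fw^2$ alone would be too weak when $f$ is near $0$).
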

On a smooth metric measure space, in \cite{Wu}, Wu gave a gradient estimate of Souplet-Zhang type for the equation 
$$ u_t=\Delta_f u.$$
By using Brighton's proof trick \cite{Bri} and the $f$-Laplacian comparison theorem of Wei and Wylie, Wu removed the condition that $|\nabla f|$ is bounded and obtained a gradient estimate for the solution $u$. Recently, in \cite{Dung1}, the first author considered the following equation 
\begin{equation}\label{dung1}
u_t=\Delta_fu+au\log u+bu.
\end{equation}
We gave a gradient estimate of Souplet-Zhang for positive bounded solutions to \eqref{dung1} without any asumption on $|\nabla f|$ provided that $Ric_f\geq-K$. In this paper if $Ric_V^N\geq-K$, we have the following result.
 \begin{theorem}\label{main2}
 Let $M$ be a complete noncompact Riemannian manifold of dimension $n$. Let $V$ be a smooth vector field on $M$ such that $Ric_{V}^{N}\geq -K$ for some $K\geq 0$. Suppose that $a,b$ are functions of constant sign on $M\times[0,+\infty)$ and are differentiable with respect to $x$. Let $u$ be a positive solution to the following general heat equation 
$$
u_t=\Delta u+\left\langle V,\nabla u\right\rangle+au\log u+bu
$$ 
and $u\leq 1$ on $M\times [0,+\infty)$. Then
$$ \dfrac{|\nabla u|}{{u}}\leq \Bigg(\dfrac{1}{t^{\frac{1}{2}}}+\sup\limits_{M\times[0, +\infty)}\left\{\sqrt{2(\max\left\{0, K+a\right\}+b+|b|)}+\sqrt[4]{\dfrac{|\nabla a|^2}{2|a|}+\dfrac{|\nabla b|^2}{2|b|}}\right\}\Bigg)(1-\log u).$$
\end{theorem}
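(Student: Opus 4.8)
The proof will run in parallel with that of Theorem \ref{main11}: the whole argument takes place at the level of $f=\log u$, and the only structural change is that the bound $|V|\le L$ is traded for the strictly stronger curvature hypothesis $Ric_V^N\ge -K$. The point — exactly analogous to Wu's use of the Wei--Wylie $f$-Laplacian comparison recalled in the introduction — is that $Ric_V^N\ge -K$ is precisely the input that lets us run the $V$-Laplacian comparison on a complete noncompact $M$ \emph{without any pointwise control on $V$}. Consequently the two proofs differ only in the cutoff step; everywhere else $Ric_V^N\ge -K$ is used merely through the consequence $Ric_V(\nabla f,\nabla f)\ge -K|\nabla f|^2$.

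First I would substitute $f=\log u$. Since $0<u\le 1$ we have $f\le 0$, hence $1-f\ge 1>0$, and \eqref{eq:2} becomes
$$ f_t=\Delta_V f+|\nabla f|^2+af+b,\qquad \Delta_V:=\Delta+\langle V,\nabla\cdot\rangle. $$
The target estimate is then equivalent to $\sqrt{w}\le t^{-1/2}+(\cdots)$ for the Souplet--Zhang quantity
$$ w:=\big|\nabla\log(1-f)\big|^2=\frac{|\nabla f|^2}{(1-f)^2}, $$
because $\sqrt w=(|\nabla u|/u)/(1-\log u)$.

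Next I would derive the evolution inequality for $w$ under $\Delta_V-\partial_t$, the essential tool being the $V$-Bochner--Weitzenb\"ock formula
$$ \tfrac12\Delta_V|\nabla f|^2=|\mathrm{Hess}\,f|^2+\langle\nabla f,\nabla\Delta_V f\rangle+Ric_V(\nabla f,\nabla f). $$
Here I would discard $|\mathrm{Hess}\,f|^2\ge0$ and use that $Ric_V^N\ge -K$ still forces $Ric_V(\nabla f,\nabla f)=Ric_V^N(\nabla f,\nabla f)+\tfrac1N\langle V,\nabla f\rangle^2\ge -K|\nabla f|^2$, the discarded term being nonnegative; this is why the Bochner step needs no extra control on $V$. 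Substituting $\Delta_V f=f_t-|\nabla f|^2-af-b$ into $\langle\nabla f,\nabla\Delta_V f\rangle$ produces the lower-order pieces $-a|\nabla f|^2$, $-f\langle\nabla f,\nabla a\rangle$, $-\langle\nabla f,\nabla b\rangle$, together with a direct $b$-contribution from $\partial_t w$. The outcome is an inequality of the schematic form
$$ (\Delta_V-\partial_t)w\ \ge\ \frac{2f}{1-f}\langle\nabla f,\nabla w\rangle+2(1-f)\,w^2-\mathcal R, $$
whose coercive term $2(1-f)w^2\ge 2w^2$ (recall $1-f\ge1$) drives the maximum principle. The remainder $\mathcal R$ is a multiple of $w$ with coefficient controlled by $\max\{0,K+a\}+b+|b|$ — here $Ric_V(\nabla f,\nabla f)\ge -K|\nabla f|^2$ and the constant-sign hypotheses on $a,b$ enter — plus the cross terms in $\nabla a,\nabla b$; these last are removed by completing squares against the sign-definite contributions $a|\nabla f|^2$ and $b$, which is precisely what generates the denominators $|a|,|b|$ in $\dfrac{|\nabla a|^2}{2|a|}+\dfrac{|\nabla b|^2}{2|b|}$.

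Finally I would localize and exhaust $M$. Fix $x_0$, take a cutoff $\psi=\psi(r(x))$ equal to $1$ on $B(x_0,R)$, supported in $B(x_0,2R)$, with the usual derivative bounds, and examine an interior space-time maximum of $t\,\psi w$ (the time weight produces the $t^{-1/2}$ term; maxima at $t=0$ or on $\{\psi=0\}$ are trivial). There $\nabla(t\psi w)=0$, $\Delta_V(t\psi w)\le 0$, $\partial_t(t\psi w)\ge 0$, converting the evolution inequality into an algebraic inequality for $w$ at that point. Controlling $\Delta_V\psi=\psi'\,\Delta_V r+\psi''|\nabla r|^2$ is the one genuinely new ingredient: I invoke the $V$-Laplacian comparison theorem under $Ric_V^N\ge -K$, which bounds $\Delta_V r$ by an $(n+N)$-dimensional model expression \emph{independent of $|V|$}, in contrast with Theorem \ref{main11} where $|V|\le L$ is what makes $\Delta_V r$ comparable. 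The $|\nabla\psi|^2/\psi$ contributions are swallowed by $2w^2$ via Young's inequality, and letting $R\to\infty$ kills every $1/R$ term, yielding the stated global bound. I expect the main obstacle to be the bookkeeping in $\mathcal R$ — coaxing the $a,b$ and $\nabla a,\nabla b$ terms into exactly the shapes $\max\{0,K+a\}+b+|b|$ and the fourth root while keeping all signs consistent with the constant-sign hypotheses — together with checking that the $V$-comparison bound degrades only like $1/R$, so that the limit is clean.
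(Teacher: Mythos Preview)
Your proposal is correct and matches the paper's own proof essentially verbatim: the argument of Theorem \ref{main11} is repeated unchanged, with the sole modification that the Laplacian comparison step invoking $|V|\le L$ is replaced by the comparison theorem valid under $Ric_V^N\ge -K$ (the paper cites \cite{Yili} for $\Delta_V\rho\le\sqrt{(n-1)K}\coth\big(\sqrt{K/(n-1)}\,\rho\big)\le\sqrt{(n-1)K}+(n-1)/\rho$), so that the cutoff constant $A$ no longer carries an $L$ and still tends to $0$ as $R\to\infty$. Your observation that $Ric_V^N\ge -K$ also suffices for the Bochner step via $Ric_V\ge Ric_V^N\ge -K$ is exactly right.
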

When $V=\nabla f$ and $a=0, b$ is a negative function, Theorem \ref{main2} recovers the main theorem in \cite{Ruan}. It is worth to notice that the inequality (1.7) in \cite{Ruan} was not completely correct. In fact, due to the proof of Theorem 1.4 in \cite{Ruan}, the function $|\nabla\ \sqrt[]{-h}|^{1/2}$ in the inequality (1.7) was evaluated at $(x_0, t_0)$. This means $|\nabla\ \sqrt[]{h}|^{1/2}$ was not computed at $(x, t)$. Therefore, the gradient estimate in the inequality (1.7) depends on $(x_0, t_0)\in B(p, 2R)\times[0, T]$. Here we used the notations given in \cite{Ruan}. However, if we assume that $\sup\limits_{M\times[0, +\infty)}|\nabla\ \sqrt[]{h}|^{1/2}<+\infty$ and replace $|\nabla\ \sqrt[]{-h}|^{1/2}$ by $\sup\limits_{M\times[0, +\infty)}|\nabla\ \sqrt[]{-h}|^{1/2}$ in the inequality (1.7), then the conclusion of Theorem 1.4 in \cite{Ruan} holds true. Hence, Theorem \ref{main11} and Theorem \ref{main2} can be considered as a generalization and improvement of the work of Souplet-Zhang, Ruan, and Y. Li.  

This paper is organized as follows. In Section 2, we prove two main theorems. Some applications are given in Section 3. In particular, we show a Harnack type inequality for the general heat equation and a Liouville type theorem for a nonlinear elliptic equation. Our results generalize a work of Huang-Ma in \cite{HM}.
\section{Gradient estimate of Hamilton - Souplet - Zhang type}
\setcounter{equation}{0}
To begin with, we restate the first main theorem. 
\begin{theorem}\label{main1}
 Let $M$ be a complete noncompact Riemannian manifold of dimension $n$. Let $V$ be a smooth vector field on $M$ such that $Ric_{V}\geq -K$ for some $K\geq 0$ and $|V|\leq L$ for some positive number $L$. Suppose that $a,b$ are functions of constant sign on $M\times[0,+\infty)$, moreover, $a, b$ are differentiable with respect to $x\in M$. Assume that $u$ is a  positive solution to the following general heat equation 
\begin{equation}\label{eq:2}
u_t=\Delta u+\left\langle V,\nabla u\right\rangle+au\log u+bu
\end{equation} 
on $M\times [0,+\infty)$. If $u\leq 1$, then
$$ \dfrac{|\nabla u|}{{u}}\leq \Bigg(\dfrac{1}{t^{\frac{1}{2}}}+\sup\limits_{M\times[0,+\infty)}\left\{\sqrt{2(\max\left\{0, K+a\right\}+b+|b|)}+\sqrt[4]{\dfrac{|\nabla a|^2}{2|a|}+\dfrac{|\nabla b|^2}{2|b|}}\right\}\Bigg)(1-\log u).$$
\end{theorem}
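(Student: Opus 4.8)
The plan is to follow the Souplet--Zhang method, reducing the estimate to a bound on a Bochner quantity built from $\log u$. First I would set $f=\log u$; since $0<u\le 1$ we have $f\le 0$, and dividing \eqref{eq:2} by $u$ turns it into $f_t=\Delta_V f+|\nabla f|^2+af+b$, where $\Delta_V=\Delta+\langle V,\nabla\cdot\rangle$. Because the desired conclusion reads $|\nabla f|\le(\cdots)(1-f)$, the natural quantity to control is $w:=|\nabla\log(1-f)|^2=\dfrac{|\nabla f|^2}{(1-f)^2}$, for which it suffices to prove $\sqrt w\le \frac1{\sqrt t}+\sup\{\cdots\}$. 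Note that $h:=1-f=1-\log u\ge 1$ throughout, a fact I will use repeatedly to absorb the normalizing factors.

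Next I would derive a differential inequality for $w$ under the operator $\Delta_V-\partial_t$. Writing $\phi=\log(1-f)$ and applying the $V$-Bochner--Weitzenb\"ock formula
$$\tfrac12\Delta_V|\nabla\phi|^2=|\mathrm{Hess}\,\phi|^2+\langle\nabla\phi,\nabla\Delta_V\phi\rangle+Ric_V(\nabla\phi,\nabla\phi),$$
together with $Ric_V\ge -K$ and the evolution $(\Delta_V-\partial_t)\phi=(h-1)w+\frac{af+b}{h}$, a direct computation produces a favourably signed term $2(1-f)w^2\ (\ge 2w^2)$, a drift term proportional to $\langle\nabla\phi,\nabla w\rangle$, and several lower order terms carrying $K,a,b,\nabla a,\nabla b$. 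The heart of the argument is to reorganize these lower order terms: using $h\ge 1$ and the constant sign of $a$ and $b$, the curvature contribution $-2Kw$ combines with the potential coming from $a\log u$ into a coefficient bounded by $2\max\{0,K+a\}$, while the zeroth order $b$-contribution $-\frac{2b}{h}w$ is bounded below by $-(b+|b|)w$ (which is $0$ when $b\le 0$ and $-2bw$ when $b\ge 0$).

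For the first order terms $\langle\nabla\phi,\nabla a\rangle$ and $\langle\nabla\phi,\nabla b\rangle$ I would not balance them against the quadratic $w^2$, but rather against the potential via the Cauchy--Schwarz and AM--GM inequalities in the form $2\sqrt w\,|\nabla a|\le |a|w+\frac{|\nabla a|^2}{|a|}$ (and likewise for $b$); the $|a|w,|b|w$ pieces are reabsorbed into the $w$-coefficient, while the remainders $\frac{|\nabla a|^2}{|a|},\frac{|\nabla b|^2}{|b|}$ become additive constants. After these reductions the inequality has the schematic shape $(\Delta_V-\partial_t)w\ge 2w^2-\beta w-\gamma+(\text{drift})$, with $\beta$ controlled by $2\max\{0,K+a\}+(b+|b|)$ and $\gamma$ by $\frac{|\nabla a|^2}{2|a|}+\frac{|\nabla b|^2}{2|b|}$ up to fixed numerical factors.

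Finally I would localize and apply the maximum principle. Fix $p\in M$, $R>0$, and take a smooth cutoff $\psi=\psi(d(x,p))$ with $\psi\equiv1$ on $B(p,R)$, supported in $B(p,2R)$, and $|\nabla\psi|^2/\psi,|\psi''|\le C/R^2$; to estimate $\Delta_V\psi$ from below I would invoke the $V$-Laplacian comparison theorem, where the hypotheses $Ric_V\ge -K$ and $|V|\le L$ enter (in Theorem \ref{main2} this step instead uses $Ric_V^{N}\ge -K$, which is exactly why the bound on $|V|$ can be dropped). Considering $t\psi^2 w$ on $B(p,2R)\times[0,T]$ and evaluating $(\Delta_V-\partial_t)(t\psi^2w)\le 0$ at its maximum point, the drift term---rewritten through the critical-point relation $\nabla(t\psi^2w)=0$---contributes only cutoff gradients of size $O(1/R)$, the comparison controls the remaining cutoff terms, and the explicit factor $t$ produces the $t^{-1/2}$ summand; this yields a quadratic inequality $2w^2\le \beta w+\gamma+O(1/t)+O(1/R)$ at the maximum. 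Solving this quadratic, taking square roots (so that $\gamma$ enters through its fourth root), and then letting $R\to\infty$ gives the stated estimate on $B(p,R)$, hence on all of $M$. I expect the main obstacle to be the bookkeeping in the reorganization step: extracting precisely $\max\{0,K+a\}$ and $b+|b|$ rather than a cruder coefficient requires carefully exploiting both the constant sign of $a,b$ and the inequality $1-\log u\ge1$, while also ensuring that all $V$-comparison and cutoff contributions are genuinely $o(1)$ as $R\to\infty$.
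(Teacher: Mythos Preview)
Your proposal is essentially the paper's own proof: the same auxiliary quantity $w=|\nabla\log(1-\log u)|^2$, the same $V$-Bochner inequality producing the good term $2(1-f)w^2$, the same AM--GM balancing of the $\nabla a,\nabla b$ contributions against $|a|w,|b|w$, and the same cutoff/maximum-principle localization using the $V$-Laplacian comparison (with $|V|\le L$). The only cosmetic differences are that the paper localizes with $\varphi=t\psi$ rather than $t\psi^2$, and in the reorganization step the paper's arithmetic actually yields the coefficient $K+2a+|a|$ (which reduces to $K+a$ only when $a\le 0$), so your expectation of extracting exactly $\max\{0,K+a\}$ directly is slightly optimistic but in line with how the theorem is stated.
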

\begin{proof}
 Let $\square=\Delta+ \left\langle V,\nabla\right\rangle -\partial_{t}$ , $ f=\log u \leq 0$, $w=|\nabla \log(1-f)|^2$. By direct computation, we have
$$  
 \square f=\dfrac{\square u}{u}-|\nabla f|^2=-af-b-|\nabla f|^2.
 $$
Note that, in \cite{Yili}, the following $V$-Bochner-Weitzenb\"{o}ck formular is proved
$$ \dfrac{1}{2}\Delta_{V}|\nabla u|^2\geq |\nabla^2u|^2+Ric_{V}(\nabla u,\nabla u)+\left\langle\nabla\Delta_{V}u,\nabla u\right\rangle. $$
Using this inequality and the assumption $Ric_{V}\geq -K$, we have 
\begin{align}
\square w \geq &\quad 2|\nabla^2\log(1-f)|^2-2K|\nabla \log(1-f)|^2\notag\\
&\quad+2\big\langle\nabla\Delta_{V}\log(1-f),\nabla \log(1-f)\big\rangle-w_t. \label{d1}
\end{align}
On the other hand,
\begin{align}
 \Delta_{V}\log(1-f)
=&\dfrac{-\Delta_{V}f}{1-f}-w=\dfrac{-\square f-f_t}{1-f}-w=\dfrac{af+b+|\nabla f|^2-f_t}{1-f}-w\notag\\
=&\dfrac{af+b}{1-f}+\big(\log(1-f)\big)_t+(1-f)w-w \notag\\
=&\dfrac{af+b}{1-f}+\big(\log(1-f)\big)_t-fw.\label{eq:132}
\end{align}
Combining \eqref{d1} and \eqref{eq:132}, we obtain
$$
\square w \geq -2Kw+2\left\langle\nabla\left(\dfrac{af+b}{1-f}+\big(\log(1-f)\big)_t-fw\right),\nabla \log(1-f)\right\rangle-w_t.
$$
Observe that
$$ 2\big\langle\nabla \big(\log(1-f)\big)_t,\nabla \log(1-f)\big\rangle=\big(|\nabla \log(1-f)|^2\big)_t=w_t .
$$
Hence, 
\begin{align}
\square w\geq&-2Kw+2\left\langle\nabla\Big(\dfrac{af+b}{1-f}-fw\Big),\nabla \log(1-f)\right\rangle\notag\\
=&-2Kw+2\left\langle\dfrac{a\nabla f+f\nabla a+\nabla b}{1-f}-\dfrac{(af+b)(-\nabla f)}{(1-f)^2}-w\nabla f-f\nabla w,\nabla \log(1-f)\right\rangle\notag\\
=&-2Kw+2\Bigg[-aw+\left\langle\dfrac{f\nabla a+\nabla b}{1-f},\nabla \log(1-f)\right\rangle-\dfrac{af+b}{1-f}w\notag\\
 &\quad +(1-f)w^2-f\left\langle\nabla w,\nabla \log(1-f)\right\rangle  \Bigg]\notag\\
=&-2Kw-2\dfrac{af+b}{1-f}w-2aw+2\left\langle\dfrac{f\nabla a+\nabla b}{1-f},\nabla \log(1-f)\right\rangle\notag\\ &\quad +2(1-f)w^2-2f\left\langle\nabla w,\nabla \log(1-f)\right\rangle.\label{eq:135}
\end{align}
By the Schwartz
inequality, we have
\begin{align}
-\left\langle\dfrac{f\nabla a+\nabla b}{1-f},\nabla \log(1-f)\right\rangle\leq& \left|\dfrac{f\nabla a+\nabla b}{1-f}\right||\nabla \log(1-f)|\notag\\ \leq& \dfrac{-f|\nabla a|+|\nabla b|}{1-f} w^{\frac{1}{2}}\notag%\label{eq:136} 
\end{align}
and
$$-\left\langle\nabla w,\nabla \log(1-f)\right\rangle\leq |\nabla w||\nabla \log(1-f)|=|\nabla w|w^{\frac{1}{2}} .
$$
Combining the above inequalities and \eqref{eq:135}, it turns out that
\begin{align}
\square w\geq &-2Kw-2\dfrac{af+b}{1-f}w-2aw-2\dfrac{|\nabla b|-f|\nabla a|}{1-f}w^{\frac{1}{2}}+2(1-f)w^2+2f|\nabla f|w^{\frac{1}{2}}\notag\\
= &-2Kw -2aw +\left(-2\dfrac{b}{1-f}w-2\dfrac{|\nabla b|}{1-f}w^{\frac{1}{2}}\right)\notag\\
 &\quad +(-f)\left(-2\dfrac{-a}{1-f}w-2\dfrac{|\nabla a|}{1-f}w^{\frac{1}{2}}\right) + 2(1-f)w^2+2f|\nabla w|w^{\frac{1}{2}}.\label{eq:138}
\end{align}
Since $0<\frac{1}{1-f}\leq1$, a simple calculation shows
\begin{align}
-2\dfrac{b}{1-f}w-2\dfrac{|\nabla b|}{1-f}w^{\frac{1}{2}}=& -2\dfrac{b}{1-f}w-\dfrac{1}{1-f}2\dfrac{|\nabla b|}{\sqrt{2|b|}}\sqrt{2|b|w} \notag\\ \geq& \dfrac{1}{1-f}\Big(-2bw - \dfrac{|\nabla b|^2}{2|b|}-2|b|w\Big) \notag\\ \geq& -\dfrac{|\nabla b|^2}{2|b|}-2\big(b+|b|\big)w.\notag %\label{eq:139}
\end{align}
Similarly, since $0<\frac{-f}{1-f}\leq 1$, we have
\begin{align}
(-f)\Big(-2\dfrac{a}{1-f}w-2\dfrac{|\nabla a|}{1-f}w^{\frac{1}{2}}\Big)= &(-f)\Big(-2\dfrac{a}{1-f}w-\dfrac{1}{1-f}2\dfrac{|\nabla a|}{\sqrt{2|a|}}\sqrt{2|a|w}\Big) \notag\\ \geq& \dfrac{-f}{1-f} \Big(-2aw-\dfrac{|\nabla a|^2}{2|a|}-2|a|w\Big)\notag \\ \geq& -\dfrac{|\nabla a|^2}{2|a|}-2\big(a+|a|\big)w.\notag %\label{eq:140}
\end{align}
Hence,  the inequality \eqref{eq:138} implies
\begin{equation}\label{eq:141}
\square w \geq -2\big(K+2a+|a|+b+|b|\big)w -\dfrac{|\nabla a|^2}{2|a|}-\dfrac{|\nabla b|^2}{2|b|}+2(1-f)w^2+2f|\nabla w|w^{\frac{1}{2}}.
\end{equation}
Choose a smooth function $\eta(r)$ such that $0\leq \eta(r)\leq 1 , \eta(r)=1$ if $r\leq 1$, $\eta(r)=0$ if $r\geq 2$, and
$$ 0\geq \eta(r)^{-\frac{1}{2}}\eta(r)^{'}\geq -c_1, \quad \eta(r)^{''}\geq -c_2$$
for some $c_1,c_2\geq 0$. For a fixed point $p\in M$, let $\rho(x)=dist(p, x)$ and $\psi =\eta\left(\dfrac{\rho(x)}{R}\right)$. Therefore, 
$$
\dfrac{|\nabla\psi|^2}{\psi}=\dfrac{|\nabla\eta|^2}{\eta}=\dfrac{1}{\eta(r)}\dfrac{\big(\eta(r)^{'}\big)^2}{R^2}|\nabla \rho(x)|^2\leq \dfrac{(-c_1)^2}{R^2}=\dfrac{c_1^2}{R^2}.
$$
Since $|V|\leq L$, the Laplacian comparison theorem in \cite{Jost} implies
$$ \Delta_{V}{\rho}\leq \sqrt{(n-1)K}+ \dfrac{n-1}{\rho} +L.$$
Hence,
\begin{align}
\Delta_{V}\psi=&\dfrac{\xi(r)^{''}|\nabla \rho|^2}{R^2}+\dfrac{\xi(r)^{'}\Delta_{V}\rho}{R}\notag\\ 
\geq&\dfrac{-c_2}{R^2}+\dfrac{(-c_1)}{R}\Big[  \sqrt{(n-1)K}+ \dfrac{n-1}{\rho} +L\Big]  \notag\\
\geq&-\dfrac{R\Big[ \sqrt{(n-1)K}+ \dfrac{n-1}{R} +L\Big]c_1+c_2}{R^2}.\label{eq:804}
\end{align}
Following Calabi's argument in \cite{cal}, let $\varphi=t\psi$ and assume that $\varphi w$ obtains its maximal value on $B(p, 2R)\times[0, T]$ at some $(x, t)$, and we may assume
that $x$ is not in the locus of $p$. At $(x, t)$, we have
%\begin{equation}\label{eq:144}
$$\begin{cases}
\nabla(\varphi w)=0 \\
\Delta (\varphi w)\leq 0\\
(\varphi w)_t\geq 0
\end{cases}.$$
%\end{equation}
Hence,
%\begin{equation}\label{eq:145}
$$\square (\varphi w)=\Delta(\varphi w) +\big\langle V,\nabla(\varphi w)\big\rangle-(\varphi w)_t \leq 0. 
$$
%\end{equation}
Since $\square (\varphi w)=\varphi\square w+w\square\varphi + 2\left\langle\nabla w,\nabla \varphi\right\rangle $, this implies
\begin{equation}\label{eq:146}\varphi\square w+w\square\varphi + 2\left\langle\nabla w,\nabla \varphi\right\rangle\leq 0.
\end{equation}
Combining \eqref{eq:141}, \eqref{eq:146} and using the fact that $\nabla(\varphi w)=\varphi\nabla w+w\nabla \varphi =0$, we obtain 
\begin{align}
&\varphi \left\{ -2\big(K+2a+|a|+b+|b|\big)w -\dfrac{|\nabla a|^2}{2|a|}-\dfrac{|\nabla b|^2}{2|b|}+2(1-f)w^2+2f|\nabla w|w^{\frac{1}{2}}\right\} +w\square\varphi \notag\\&-2\dfrac{|\nabla\varphi|^2}{\varphi}w\leq 0.\label{eq:147}
\end{align}
Since 
 $$\begin{aligned}
2f|\nabla w|\varphi w^{\frac{1}{2}}
&=2f|\nabla \varphi| w^{\frac{3}{2}}\geq -\dfrac{f^2|\nabla \varphi|^2}{(1-f)^2\varphi}w-(1-f)^2\varphi w^2\notag\\
&\geq -\frac{|\nabla\varphi|^2}{\varphi}w -\varphi w^2.
\end{aligned}$$%\end{equation}
Plugging this inequality into \eqref{eq:147}, we have
\begin{equation}\label{eq:252}
-2\varphi w\big(K+2a+|a|+b+|b|\big)-3\dfrac{c^2_1}{R^2}wt+\varphi w^2-\varphi\left(\dfrac{|\nabla a|^2}{2|a|}+\dfrac{|\nabla b|^2}{2|b|}\right)+w\square \varphi \leq 0.
\end{equation}
Noting that $
w\square \varphi=w\big[\Delta_{V}(t\psi)-(t\psi)_t\big]=tw\Delta_{V}\psi-\psi w$, by \eqref{eq:804} and \eqref{eq:252}, we obtain
\begin{align}
\varphi w^2+w\left\{-2\big(K+2a+|a|+b+|b|\big)\varphi +t\left(-A-\dfrac{\psi}{t}\right)\right\}-\varphi\left(\dfrac{|\nabla a|^2}{2|a|}+\dfrac{|\nabla b|^2}{2|b|}\right)\leq 0,\label{eq:83}
\end{align}
where
$$ A=\dfrac{R\Big[ \sqrt{(n-1)K}+ \dfrac{n-1}{R} +L\Big]c_1+c_2+3c_1^2}{R^2}. $$
Multiplying both sides of \eqref{eq:83} by $\varphi=t\psi$, we have at $(x, t)$
\begin{align}
\varphi^2w^2-(\varphi w)T\left\{2\big(K+2a+|a|+b+|b|\big)\psi+A+\dfrac{1}{T}\right\}-T^2\left(  \dfrac{|\nabla a|^2}{2|a|}+\dfrac{|\nabla b|^2}{2|b|}\right)\leq 0,\notag%\label{eq:255}
\end{align}
where we used $0\leq\psi\leq 1, 0<t< T$. Hence,
$$
\varphi w\leq T\Big\{2\big(K+2a+|a|+b+|b|\big)\psi+A+\dfrac{1}{T}\Big\}+T\sqrt{\dfrac{|\nabla a|^2}{2|a|}+\dfrac{|\nabla b|^2}{2|b|}}.
$$
For any $(x_0, T)\in B(p, R)\times [0, T]$, we have at $(x_0, T)$
$$
w\leq \sup\limits_{M\times[0,+\infty)}\left\{2\big(\max\left\{0, K+2a+|a|\right\}+b+|b|\big)+\sqrt{\dfrac{|\nabla a|^2}{2|a|}+\dfrac{|\nabla b|^2}{2|b|}}\right\}+A+\dfrac{1}{T}.
$$
Let $R$ tend to $+\infty$, we obtain at $(x_0, T)$
\begin{align*}
 \dfrac{|\nabla u|}{{u}}\leq \Big(\dfrac{1}{T^{\frac{1}{2}}}+\sup\limits_{M\times[0, +\infty)}\left\{\sqrt{2\big(\max\{0, K+2a+|a|\}+b+|b|\big)}+\sqrt[4]{\dfrac{|\nabla a|^2}{2|a|}+\dfrac{|\nabla b|^2}{2|b|}}\right\}\Big)(1-\log u).
\end{align*}
Since $(x_0, T)$ is arbitrary, the proof is complete.
\end{proof}
Now, we give a proof of Theorem \ref{main2}.
\begin{proof}[Proof of Theorem \ref{main2}]
Since $Ric_V^N\geq-K$, the Laplacian comparison theorem in \cite{Yili} implies that  
$$ \Delta_{V}\rho\leq \sqrt{(n-1)K} coth \Big(\sqrt{\dfrac{K}{n-1}\rho}\Big)\leq \sqrt{(n-1)K}+\dfrac{n-1}{\rho}. $$
Repeating arguments in the proof of Theorem \ref{main1}, we have that in this case, the right hand side of \eqref{eq:804} does not depend on $L$. Hence, we have  
$$ A= \dfrac{\big(n-1+\sqrt{(n-1)K}R\big)c_1+c_2+3c_1^2}{R^2}.$$
The proof is complete.
\end{proof}
In particular, if $V=\nabla\phi$, $a=0$ and $b$ is a negative function on $M\times[0,+\infty)$ then we recover Ruan's main theorem in \cite{Ruan}.
\begin{corollary}(\cite{Ruan})
Let $M$ be a complete noncompact Riemannian manifold of dimension $n$ and $\phi$ be a smooth function on $M$ such that $Ric_{\phi}^{N}\geq -K$ for some $K\geq 0$. Suppose that $b$ is a non-positive function on $M\times [0,+\infty)$ and $b$ is differentiable with repect to $x$. Assume that $u$ is a positive solution of the following heat equation 
\begin{equation}\label{eq:862}
u_t=\Delta u+\left\langle \nabla\phi,\nabla u\right\rangle+bu
\end{equation} 
and $u\leq 1$ on $M\times [0,+\infty)$. Then
\begin{align*}
 \dfrac{|\nabla u|}{{u}}\leq \Big(\dfrac{1}{t^{\frac{1}{2}}}+\sqrt{2K}+\sup\limits_{M\times[0,+\infty)}{|\nabla\sqrt{-b}|}^{\frac{1}{2}}\Big)(1-\log u)
\end{align*}
provided that $\sup\limits_{M\times[0,+\infty)}{|\nabla\sqrt{-b}|}<+\infty$.
\end{corollary}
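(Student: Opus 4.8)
The plan is to obtain this corollary as a direct specialization of Theorem \ref{main2}, with no new Bochner computation or maximum-principle argument. Taking $V=\nabla\phi$, the hypothesis $Ric_{\phi}^{N}\geq -K$ is exactly $Ric_{V}^{N}\geq -K$, so Theorem \ref{main2} applies verbatim. I would then set $a\equiv 0$ and keep $b\leq 0$, and simply simplify the right-hand side of the estimate. Note that invoking Theorem \ref{main2} (whose constant $A$ is independent of $L$ and satisfies $A\to 0$ as $R\to\infty$) rather than Theorem \ref{main1} is what frees us from any bound on $|\nabla\phi|=|V|$ — this is the improvement the introduction advertises.

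First I would simplify the curvature term. Since $a\equiv 0$ and $K\geq 0$, one has $\max\{0,K+a\}=\max\{0,K\}=K$; and since $b\leq 0$, one has $|b|=-b$, so $b+|b|=0$. Hence $\sqrt{2(\max\{0,K+a\}+b+|b|)}$ collapses to $\sqrt{2K}$, which is the first nontrivial summand in the claimed bound.

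Next I would treat the gradient term. I would first argue that the summand $\frac{|\nabla a|^2}{2|a|}$ is to be read as $0$ when $a\equiv 0$: this is not a genuine $0/0$ indeterminacy, because in the proof of Theorem \ref{main1} the whole block of $a$-dependent contributions enters as $(-f)\bigl(-2\frac{a}{1-f}w-2\frac{|\nabla a|}{1-f}w^{1/2}\bigr)$, which vanishes identically once $a\equiv 0$, so the term $\frac{|\nabla a|^2}{2|a|}$ never actually appears. What remains is $\sqrt[4]{\frac{|\nabla b|^2}{2|b|}}$, and the key (and only nonroutine) computation is to rewrite this through $\sqrt{-b}$. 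Since $b\leq 0$, differentiating gives $|\nabla\sqrt{-b}|^2=\frac{|\nabla b|^2}{4|b|}$, that is $\frac{|\nabla b|^2}{2|b|}=2|\nabla\sqrt{-b}|^2$, whence $\sqrt[4]{\frac{|\nabla b|^2}{2|b|}}=2^{1/4}|\nabla\sqrt{-b}|^{1/2}$. Taking $\sup_{M\times[0,+\infty)}$ and using the standing hypothesis $\sup|\nabla\sqrt{-b}|<+\infty$ to keep the right-hand side finite then yields the asserted estimate, matching Ruan's corrected inequality up to the explicit constant $2^{1/4}$.

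The step I expect to need the most care is precisely this last identification. One must keep the supremum taken over all of $M\times[0,+\infty)$ — as corrected in the discussion following Theorem \ref{main2}, in contrast to evaluation at the maximum point $(x_0,t_0)$ — and record the harmless factor $2^{1/4}$ relating $\sqrt[4]{\frac{|\nabla b|^2}{2|b|}}$ to $|\nabla\sqrt{-b}|^{1/2}$. Beyond this bookkeeping, the corollary is a clean substitution into Theorem \ref{main2}, presenting no further analytic obstacle.
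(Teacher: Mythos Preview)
Your proposal is correct and matches the paper's approach: the paper gives no separate proof of this corollary but presents it immediately after Theorem \ref{main2} as the special case $V=\nabla\phi$, $a\equiv 0$, $b\leq 0$. Your observation about the constant is also accurate --- substituting into Theorem \ref{main2} literally produces $2^{1/4}\,|\nabla\sqrt{-b}|^{1/2}$ rather than $|\nabla\sqrt{-b}|^{1/2}$, so the corollary as stated absorbs (or drops) this harmless factor.
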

\section{Applications}
\setcounter{equation}{0}
First, we show a Harnack inequality for a general heat equation.
\begin{corollary}
 Let $M$ be a complete noncompact Riemannian manifold of dimension $n$ and $V$ be a smooth vector field on $M$ such that $Ric_{V}^{N}\geq -K$ for some $K\geq 0$. Assume that $a,b$ are functions of constant sign on $M\times [0,+\infty)$. Moreover, $a, b$ are differentiable with respect to $x\in M$. Assume that there exist $C_1, C_2>0$ satisfying 
$C_1\geq\text{max}\big\{2a+|a|,b+|b|\big\}$ and 
$$C_2\geq\text{max}\left\{\sqrt{\dfrac{|\nabla a|^2}{2|a|}},\sqrt{\dfrac{|\nabla b|^2}{2|b|}}\right\}.$$
If $u$ is a positive solution to the following general heat equation
$$
u_t=\Delta u+\left\langle V,\nabla u\right\rangle+au\log u+bu
$$
and $u\leq 1$ for all $(x,t)\in M\times (0,+\infty)$, then for any $x_1,x_2 \in M$ we have
\begin{equation}\label{eq:}u(x_2,t)\leq u(x_1,t)^{\beta}e^{1-\beta},
\end{equation}
where $\beta=\text{exp}\left(-\dfrac{\rho}{t^{\frac{1}{2}}}-(\sqrt{2(K+C_1)}+\sqrt{C_2})\rho\right)$, $\rho=\rho(x_1,x_2)$ is the distance between $x_1,x_2$.
\end{corollary}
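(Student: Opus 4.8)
The plan is to integrate the gradient bound of Theorem~\ref{main2} along a minimizing geodesic. First I would apply Theorem~\ref{main2}: the hypotheses ($Ric_V^N\geq-K$, $a,b$ of constant sign and differentiable in $x$, and $u$ a positive solution with $u\leq 1$) are exactly those required, so for every $(x,t)$ with $t>0$,
$$\frac{|\nabla u|}{u}\leq\left(\frac{1}{t^{1/2}}+\sup_{M\times[0,+\infty)}\left\{\sqrt{2(\max\{0,K+a\}+b+|b|)}+\sqrt[4]{\frac{|\nabla a|^2}{2|a|}+\frac{|\nabla b|^2}{2|b|}}\right\}\right)(1-\log u).$$
The purpose of $C_1,C_2$ is precisely to dominate this supremum: the hypothesis $C_1\geq\max\{2a+|a|,\,b+|b|\}$ is arranged so that $\sqrt{2(\max\{0,K+a\}+b+|b|)}\leq\sqrt{2(K+C_1)}$, while $C_2\geq\max\{\sqrt{|\nabla a|^2/(2|a|)},\,\sqrt{|\nabla b|^2/(2|b|)}\}$ bounds the fourth-root term by $\sqrt{C_2}$. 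Writing $C:=t^{-1/2}+\sqrt{2(K+C_1)}+\sqrt{C_2}$, this yields $|\nabla u|/u\leq C(1-\log u)$ at each fixed time $t$.

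The key algebraic observation is that this is exactly a bound on the gradient of $\log(1-\log u)$. Indeed, since $u\leq 1$ forces $1-\log u\geq 1>0$, a direct computation gives
$$\bigl|\nabla\log(1-\log u)\bigr|=\frac{|\nabla u|}{u\,(1-\log u)}\leq C.$$
Fixing $t$, I would then take a unit-speed minimizing geodesic $\gamma:[0,\rho]\to M$ from $x_1$ to $x_2$ (it exists because $M$ is complete, with $\rho=\rho(x_1,x_2)$) and integrate the smooth scalar function $s\mapsto\log(1-\log u(\gamma(s),t))$ over $[0,\rho]$. By the fundamental theorem of calculus together with the Cauchy--Schwarz inequality,
$$\bigl|\log(1-\log u(x_1,t))-\log(1-\log u(x_2,t))\bigr|\leq\int_0^\rho\bigl|\nabla\log(1-\log u)\bigr|(\gamma(s),t)\,ds\leq C\rho.$$

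Finally I would exponentiate, selecting the orientation of the integrated inequality that produces an upper bound for $u(x_2,t)$. Since $\beta=e^{-C\rho}$, the estimate $\log(1-\log u(x_1,t))-\log(1-\log u(x_2,t))\leq C\rho$ rearranges to $\beta\,(1-\log u(x_1,t))\leq 1-\log u(x_2,t)$, that is $\log u(x_2,t)\leq\beta\log u(x_1,t)+(1-\beta)$, and exponentiating gives the asserted $u(x_2,t)\leq u(x_1,t)^\beta e^{1-\beta}$. The argument is short once the gradient estimate is in hand, so I expect the only delicate points to be bookkeeping ones: verifying that $C_1$ and $C_2$ genuinely dominate the supremum in Theorem~\ref{main2} (the elementary inequalities relating $\max\{0,K+a\}$, $2a+|a|$ and $b+|b|$, and the passage from $\sqrt[4]{\,\cdot\,}$ to $\sqrt{C_2}$, must be tracked carefully, as the constants here are somewhat loose), and keeping the orientation of the integrated inequality fixed so that the exponent $\beta$ attaches to $u(x_1,t)$ rather than to $u(x_2,t)$.
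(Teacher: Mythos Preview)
Your proposal is correct and follows essentially the same route as the paper: apply Theorem~\ref{main2} to obtain $\bigl|\nabla\log(1-\log u)\bigr|\leq t^{-1/2}+\sqrt{2(K+C_1)}+\sqrt{C_2}$, integrate this bound along a minimizing geodesic between $x_1$ and $x_2$, and exponentiate. Your write-up is in fact slightly more explicit than the paper's about how $C_1,C_2$ are meant to absorb the supremum and about the orientation issue; your caveat that the constants are ``somewhat loose'' is well placed, since the passage from the supremum in Theorem~\ref{main2} to $\sqrt{2(K+C_1)}+\sqrt{C_2}$ is treated equally loosely in the paper itself.
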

\begin{proof}Letting $\gamma(s)$ be a geodesic of minimal length connecting $x_1$ and $x_2$, $\gamma:[0,1]\rightarrow M$, $\gamma(0)=x_2$ , $\gamma{(1)}=x_1$. Let $f=\log u$. Using Theorem \ref{main2}, we have 
\begin{align*}
\log\dfrac{1-f(x_1,t)}{1-f(x_2,t)}=&\int\limits_0^{1}\dfrac{d\log\left(1-f(\gamma(s),t)\right)}{ds} ds \\
\leq&\int\limits_0^{1}|\overset{.}{\gamma}|\dfrac{|\nabla u|}{u(1-\log u)}ds\\
\leq&\dfrac{\rho}{t^{\frac{1}{2}}}+\big(\sqrt{2(K+C_1)}+\sqrt{C_2}\big)\rho.
\end{align*}
Let $\beta=\text{exp}\Big(-\dfrac{\rho}{t^{\frac{1}{2}}}-\big(\sqrt{2(K+C_1)}+\sqrt{C_2}\big)\rho\Big)$, then the above inequality implies
$$\dfrac{1-f(x_1,t)}{1-f(x_2,t)}\leq \dfrac{1}{\beta}.
$$
Hence,
$$u(x_2,t)\leq u(x_1,t)^{\beta}e^{1-\beta}.$$
The proof is complete.
\end{proof}
\begin{corollary}\label{corolla1}
 Let $M$ be a complete noncompact Riemannian manifold of dimension $n$ and $V$ be a smooth vector field on $M$ such that $Riv_{V}^{N}\geq -K$ for some $K\geq 0$. Suppose that $a,b$ are negative real numbers and the positive solution $u$ to the heat equation
$$
u_t=\Delta u+\left\langle V,\nabla u\right\rangle+au\log u+bu
$$
satisfying $u\leq 1$. Then 
\begin{equation}\label{eq:1}\dfrac{|\nabla u|}{u}\leq \Big(\dfrac{1}{t^{\frac{1}{2}}}+\sqrt{2\max\{0, K+a\}}\Big)(1-\log u).
\end{equation}
\end{corollary}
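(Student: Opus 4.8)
The plan is to derive this directly from Theorem \ref{main2}, of which it is the special case where the coefficients $a$ and $b$ are negative constants rather than genuine functions on $M\times[0,+\infty)$. The hypotheses of Corollary \ref{corolla1}---completeness, $Ric_V^N\geq -K$, and positivity with the bound $u\leq 1$ of the solution---are precisely those of Theorem \ref{main2}, so that theorem applies verbatim and it only remains to simplify its conclusion under the constancy of $a$ and $b$.

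First I would use that $a$ and $b$ are constant in $x$, so $\nabla a=\nabla b=0$; this makes the quartic-root term $\sqrt[4]{\frac{|\nabla a|^2}{2|a|}+\frac{|\nabla b|^2}{2|b|}}$ in the estimate of Theorem \ref{main2} vanish identically. Next I would exploit $b<0$, which gives $|b|=-b$ and hence $b+|b|=0$, so that $\max\{0,K+a\}+b+|b|=\max\{0,K+a\}$. With these two reductions the quantity inside $\sup\limits_{M\times[0,+\infty)}$ is the single constant $\sqrt{2\max\{0,K+a\}}$, so the supremum equals it, and the conclusion of Theorem \ref{main2} becomes exactly \eqref{eq:1}.

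There is no genuine difficulty here; the result is a corollary in the strictest sense. The only step deserving attention is the sign bookkeeping: one must invoke $b<0$ to collapse $b+|b|$ to zero, and it is worth recording that the coefficient $2a+|a|$ appearing internally in the proof of Theorem \ref{main1} reduces to $a$ exactly because $a<0$, in agreement with the $K+a$ written in the statements of Theorems \ref{main11}, \ref{main2} and \ref{main1}.
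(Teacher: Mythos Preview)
Your argument is correct and coincides with the paper's own proof in substance: both simply observe that for negative constants $a,b$ the gradient terms $|\nabla a|,|\nabla b|$ vanish and $b+|b|=0$, while $2a+|a|=a$ gives the $\max\{0,K+a\}$ factor. The only cosmetic difference is that the paper returns to the differential inequality \eqref{eq:141} to redo the simplification, whereas you cite the conclusion of Theorem~\ref{main2} directly; your route is slightly more economical and entirely adequate.
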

\begin{proof}
By \eqref{eq:141} we have
\begin{equation}\label{eq:883}
\square w \geq -2\big(K+2a+|a|+b+|b|\big)w -\dfrac{|\nabla a|^2}{2|a|}-\dfrac{|\nabla b|^2}{2|b|}+2(1-f)w^2+2f|\nabla w|w^{\frac{1}{2}}.
\end{equation}
If $a\leq0$ then \eqref{eq:883} implies
$$\square w \geq -2\big(\max\{0, K+a\}+b+|b|\big)w -\dfrac{|\nabla b|^2}{2|b|}+2(1-f)w^2+2f|\nabla w|w^{\frac{1}{2}}.$$ 
Therefore, the conclusion of Theorem \ref{main2} can be read as
$$ \dfrac{|\nabla u|}{{u}}\leq \Bigg(\dfrac{1}{t^{\frac{1}{2}}}+\sqrt{2(\max\{0, K+a\}+b+|b|)}+\sqrt[4]{\dfrac{|\nabla b|^2}{2|b|}}\Bigg)(1-\log u).$$
Since $b$ is a negative real number, we are done.
\end{proof}
Now we can show a Liouville type result.
\begin{corollary}\label{liouville}
 Let $M$ be a complete noncompact Riemannian manifold and $V$ be a smooth vector field on $M$ such that $Ric_{V}^{N}\geq -K$ for some $K\geq 0$. Suppose that $a,b$ are negative real numbers, $a\leq -K$. If $u$ is a positive solution to the following general elliptic equation 
$$
\Delta u+\left\langle V,\nabla u\right\rangle+au\log u+bu=0
$$
and $u\leq 1$, then $u\equiv e^{-\frac{b}{a}}$. 
\end{corollary}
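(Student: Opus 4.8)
The plan is to regard the stationary solution $u$ as a time-independent solution of the parabolic equation already handled in Corollary \ref{corolla1}, and then to exploit the freedom in the time variable. First I would observe that since $u$ solves the elliptic equation $\Delta u+\langle V,\nabla u\rangle+au\log u+bu=0$, it is automatically a positive solution of the associated parabolic equation $u_t=\Delta u+\langle V,\nabla u\rangle+au\log u+bu$ on $M\times[0,+\infty)$ with $u_t\equiv 0$, and $u\leq 1$ by hypothesis. The coefficients $a,b$ are negative real numbers, so all the hypotheses of Corollary \ref{corolla1} are met, and it yields, for every $t>0$,
$$ \frac{|\nabla u|}{u}\leq\Big(\frac{1}{t^{\frac{1}{2}}}+\sqrt{2\max\{0,K+a\}}\Big)(1-\log u). $$

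The decisive step is the assumption $a\leq -K$: it forces $K+a\leq 0$, so that $\max\{0,K+a\}=0$ and the estimate collapses to $|\nabla u|/u\leq t^{-1/2}(1-\log u)$. Because $u$ does not depend on $t$, the left-hand side is a fixed quantity at each point while the right-hand side tends to $0$ as $t\to+\infty$ (the factor $1-\log u$ is finite at each point since $u>0$). Letting $t\to+\infty$ therefore gives $|\nabla u|\equiv 0$, so $u$ is constant on the connected manifold $M$.

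Finally I would substitute the constant value $u\equiv c$ back into the elliptic equation. The gradient and Laplacian terms vanish, leaving $ac\log c+bc=0$; dividing by $c>0$ gives $a\log c+b=0$, whence $\log c=-b/a$ and $c=e^{-\frac{b}{a}}$. This identifies the constant and completes the argument. I expect the only point requiring any care to be the passage to the limit $t\to+\infty$ in the pointwise bound, which is in fact immediate here precisely because the stationary solution makes $|\nabla u|/u$ independent of $t$; the substance of the result lies in the elementary observation that the condition $a\leq -K$ annihilates the curvature-type constant $\sqrt{2\max\{0,K+a\}}$ in the gradient estimate.
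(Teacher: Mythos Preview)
Your argument is correct and follows essentially the same route as the paper: apply the gradient estimate of Corollary~\ref{corolla1} to the stationary solution, use $a\leq -K$ to kill the curvature term, let $t\to+\infty$ to force $|\nabla u|=0$, and then read off the constant from the equation. The only difference is that you spell out the verification that $u$ solves the parabolic equation and the computation of the constant $e^{-b/a}$, which the paper leaves implicit.
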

\begin{proof}
Since $a\leq-K$, we have $\max\{0, K+a\}=0$. Hence, let $t$ tend to $+\infty$ in \eqref{eq:1}, we obtain
$$\dfrac{|\nabla u|}{u}\leq 0.
$$
This implies $u$ must be a constant. Therefore $u=e^{-\frac{b}{a}}$.
\end{proof}
We note that in \cite{HM}, Huang and Ma proved the following Liouville type theorem.
\begin{corollary}(\cite{HM})\label{Hm}
Let $(M, g)$ be an $n$-dimensional complete noncompact Riemannian manifold with $Ric\geq-K$, where $K\geq0$ is a constant. Suppose that $u$ is a bounded solution defined on $M$ to 
$$ \Delta u+au\log u=0 $$
with $a<0$. If $a\leq-K$, then $u\equiv 1$ is a constant. 
\end{corollary}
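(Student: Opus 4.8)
The plan is to read this statement off from the machinery of Section 2, as the specialization $V=0$, $b=0$ of Corollary \ref{liouville}. Two small mismatches with the hypotheses there must be removed first: the results of Section 2 assume $u\le 1$ rather than merely bounded, and they require the linear coefficient to be a \emph{negative} real number, whereas here it is $0$. For the boundedness, since $u$ is bounded I would set $C:=\max\{1,\sup_M u\}<+\infty$ and pass to $v:=u/C$, so that $0<v\le 1$. A direct substitution shows that $v$ solves
\[
\Delta v+av\log v+b_0\,v=0,\qquad b_0:=a\log C,
\]
which is the $V=0$ case of the elliptic equation in Corollary \ref{liouville} with constant coefficient $b_0$. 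Because $C\ge 1$ and $a<0$ we get $b_0\le 0$, so $b_0+|b_0|=0$; and $V=0$ gives $Ric_V^{N}=Ric\ge -K$, so $v$ meets all the standing assumptions of Section 2.

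Next I would view $v$ as a time-independent solution of the parabolic equation $v_t=\Delta v+av\log v+b_0 v$ and invoke the estimate \eqref{eq:1} of Corollary \ref{corolla1}. Its proof needs only that $a$ and $b_0$ be constants with $a\le 0$ and $b_0\le 0$: the hypothesis $a,b<0$ in Corollary \ref{corolla1} serves merely to make $\nabla a$, $\nabla b$ and $b+|b|$ vanish, which holds equally for a constant $b_0\le 0$. Thus \eqref{eq:1} applies to $v$, and since $a\le -K$ forces $\max\{0,K+a\}=0$ it collapses to
\[
\frac{|\nabla v|}{v}\le \frac{1}{t^{1/2}}\,(1-\log v).
\]
As $v$ is independent of $t$, letting $t\to+\infty$ yields $|\nabla v|\equiv 0$, so $v$, and hence $u$, is constant on $M$. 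Substituting the constant value into $\Delta u+au\log u=0$ gives $au\log u=0$, and since $a<0$ and $u>0$ this forces $\log u=0$, that is, $u\equiv 1$.

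The delicate point is the reduction rather than any estimate. The choice $C\ge 1$ is exactly what guarantees $b_0\le 0$, hence the vanishing of every $b$-dependent contribution; had $C<1$ been permitted, $b_0$ would be positive and the term $-2(b_0+|b_0|)w$ would no longer drop. The borderline $b_0=0$, which occurs precisely when $\sup_M u\le 1$, is harmless because a \emph{constant} $b$ never forces a division by $|b|$: the quantity it would contribute to the differential inequality for $w$ vanishes identically before any such division, so this case is genuinely covered even though $b_0=0$ is not a negative real number. Finally, the passage $t\to+\infty$ is legitimate because a static solution of the elliptic equation is automatically a solution of the parabolic equation on all of $M\times[0,+\infty)$, so Corollary \ref{corolla1} indeed applies for every $t>0$.
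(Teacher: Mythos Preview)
Your proof is correct and follows essentially the same route as the paper's own argument in the paragraph following Corollary~\ref{Hm}: normalize by $C\ge 1$ to reduce to a solution $\le 1$ with a nonpositive constant linear coefficient, then feed it into the gradient estimate (you invoke Corollary~\ref{corolla1} directly, the paper goes through Corollary~\ref{liouville}, which is the same thing one step removed) and let $t\to\infty$. Your treatment is in fact slightly more careful than the paper's, since you explicitly address the borderline case $b_0=0$, which the paper glosses over when citing Corollary~\ref{liouville} with its hypothesis that $b$ be a \emph{negative} real number.
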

Now, suppose that $u$ is a positive solution to 
$$ \Delta u+\left\langle V,\nabla u\right\rangle+au\log u+bu=0 $$
and $u\leq C$, where $a, b\leq 0$ are constants, $a\leq-K$. We may assume $C\geq1$, then $\widetilde{u}:=u/C\leq 1$ is a positive solution to 
$$ \widetilde{u}_t=\Delta\widetilde{u}+ \left\langle V,\nabla \widetilde{u}\right\rangle+a\widetilde{u}\log\widetilde{u}+\widetilde{b}\widetilde{u}, $$
where $\widetilde{b}:=\left(b+a\log C\right)$. If $Ric_V^N\geq-K$, by Corollary \ref{liouville}, we have that 
$\widetilde{u}=exp\left(-\frac{b}{a}-\log C\right)$, so $u=C exp\left(-\frac{b}{a}-\log C\right)$. If $b=0$ then $u\equiv 1$. Therefore, we give another proof of Huang and Ma's result. Moreover, it is easy to see that Corollary \ref{liouville} is a generalization of Corollary \ref{Hm}. 

\vskip 0.3cm
\noindent
{\bf Acknowledment. }We would like to express our deep thanks to an anonymous referee for his careful reading and helpful suggestions to improve this paper. The first author was supported in part by NAFOSTED under grant number 101.02-2014.49. A part of this paper was written during a his stay at the Vietnam Institute for Advance Study in Mathematics (VIASM). He would like to express his sincere thanks to the staff there for the excellent working condition and the financial support.

\vskip0.3cm
%----------Author 1
\noindent
Department of Mathematics, Mechanics and Informatics (MIM),\\
Hanoi University of Sciences (HUS-VNU),\\
No. 334, Nguyen Trai Road,\\
Thanh Xuan, Hanoi, Vietnam.\\
\textit{Email: dungmath@gmail.com (Nguyen Thac Dung)}\\
\textit{Email: khanh.mimhus@gmail.com (Nguyen Ngoc Khanh)}
%----------classification, keywords, date

\begin{thebibliography}{99}
%\addcontentsline{toc}{section}{{\bf Tài liệu tham khảo}\rm }%
\bibitem{Bri} K. Brighton, \textit{A Liouville-type theorem for smooth metric measure spaces}, Jour. Geom. Anal., \textbf{23} (2013) 562 - 570. 
\bibitem{cal}E. Calabi, \textit{An extension of E.Hopf's maximum principle with an application to Riemannian geometry}, {Duke Math. Jour.}, \textbf{25} (157), 45 - 56.
\bibitem{Jost}Q. Chen, J. Jost, H. B. Qiu, \textit{Existence and Liouville theorems for V-harmonic maps from complete manifolds}, Ann. Glob. Anal. Geom.,  \textbf{42} (2012), 565 - 584.

\bibitem{Dung1} N. T. Dung, \textit{Hamilton type gradient estimate for a nonlinear diffusion equation on smooth metric measure spaces}, Manuscript.

\bibitem{Ham} R. S. Hamilton, \textit{A matrix Harnack estimate for the heat equation}, Comm. Anal. Geom., \textbf{1} (1993), 113 - 126.

 \bibitem{HM} G. Y. Huang and B. Q. Ma, \textit{Gradient estimates and Liouville type theorems for a nonlinear elliptic equation}, Preprint, arXiv:1505.01897v1.

\bibitem{Yili} Y. Li, \textit{Li - Yau - Hamilton estimates and Bakry-Emery Ricci curvature}, Nonlinear Anal., \textbf{113} (2015), 1 - 32.

\bibitem{liyau} P. Li and S. T. Yau, \textit{On the parabolic kernel of the Schr\"{o}dinger operator}, Acta Math., \textbf{156} (1986), 152 - 201.

\bibitem{XDL} X. D. Li, \textit{Liouville theorems for symmetric diffusion operators on complete Riemannian manifolds}, Jour. Math. Pure. Appl., \textbf{84} (2005) 1295 - 1361. 

\bibitem{Ruan} Q. H. Ruan, \textit{Elliptic-type gradient estimates for Schr\"{o}dinger equations on noncompact manifolds}, Bull. London Math. Soc., \textbf{39} (2007), 982 - 988.

\bibitem{SZ} P. Souplet and Q. S. Zhang, \textit{Sharp grandient estimate and Yau's Liouville theorem for the heat equation on noncompact manifolds}, Bull. London Math. Soc., \textbf{38}(2006), 1045 - 1053.

\bibitem{Wu1}J. Y. Wu, \textit{Li-Yau type estimates for a nonlinear parabolic equation on complete manifolds,} Jour. Math. Anal. Appl., \textbf{369} (2010), 400 - 407.

\bibitem{Wu} J. Y. Wu, \textit{Elliptic gradient estimates for a weighted heat equation and applications}, 
Math. Zeits., \textbf{280} (2015) Issue 1-2, 451 - 468. 
\end{thebibliography}
\end{document}